\title{A short note on model theory of $\C((t))$}
\author{Zhentao Zhang}
\date{}
\DeclareMathOperator*{\forkindep}{\raise0.2ex\hbox{\ooalign{\hidewidth$\vert$\hidewidth\cr\raise-0.9ex\hbox{$\smile$}}}}
\newcommand{\Int}{\operatorname{Int}}
\newcommand{\res}{\operatorname{res}}
\newcommand{\id}{\operatorname{id}}
\newcommand{\acl}{\operatorname{acl}}
\newcommand{\cl}{\operatorname{cl}}
\newcommand{\F}{\mathcal{F}}
\newcommand{\N}{\mathbb{N}}
\newcommand{\C}{\mathbb{C}}
\newcommand{\Z}{\mathbb{Z}}
\newcommand{\Q}{\mathbb{Q}}
\newcommand{\pCF}{p\mathrm{CF}}
\newcommand{\ACF}{\mathrm{ACF}}
\newtheorem{theorem}{Theorem}[section] 
\newtheorem{lemma}[theorem]{Lemma}
\newtheorem{conj}[theorem]{Conjecture}
\newtheorem{prop}[theorem]{Proposition}
\newtheorem{proposition-eh}[theorem]{Proposition(?)}
\newtheorem*{theorem-star}{Main Theorem}
\newtheorem*{theorem-star-A}{Theorem A}
\newtheorem*{theorem-star-B}{Theorem B}
\newtheorem*{conjecture-star}{Conjecture}
\newtheorem*{lemma-star}{Lemma}
\newtheorem*{claim-star}{Claim}
\theoremstyle{definition}
\newtheorem{remark}[theorem]{Remark}
\newcommand{\PP}{\mathbb{P}}
\newcommand{\trans}{\mathrm{trans}}
\newcommand{\M}{\mathbb{M}}
\newcommand{\cO}{\mathcal{O}}
\newcommand{\m}{\mathfrak{m}}
\begin{document}
\maketitle
\begin{abstract}
In this short note, we study $\C((t))$ in the language of valued rings. We show that a definable subset of $\C((t))^n$ (or in monster model, $\M^n$) is definably compact iff it is closed and unbounded. Then we give some comments on definable groups over $\C((t))$.  
\end{abstract}

\section{Introduction}

In recent years, the study on definable groups in $\pCF$ has been fruitful. In particular, dfg groups and fsg groups play an important role in studying definable groups. In \cite{LY}, C. Ling and N. Yao gave a glance on the parallel theory on $\C((t))$ (in the language of valued rings), specially, for ``bounded'' definable groups. But the final conjecture in \cite{LY} was not given properly, because they did not give ``boundedness'' properly and one can easily embed the additive group of $\C((t))$ into $\C[[t]]$ definably.

In Section \ref{sect-def-cpt}, we study the things in the affine space $\C((t))^n$ (with the usual topology). We show that a definable subset of $\C((t))^n$ is definably compact iff it is closed and unbounded. 

In Section \ref{sect-group}, we have some comments on definable groups. We can equip definable groups with a group topology by finitely many affine pieces and give a $\Gamma$-exhaustion as in \cite{JY-1}. This result gives a foundation for further research.  
In particular, we can give a proper version of the final conjecture of \cite{LY}.

In Section \ref{sect-app}, we will give some results on definable groups, for an example, the Peterzil–Steinhorn theorem for abelian groups.

For notation, we always study $\C((t))$ in $L$, the language of rings, and let $\M$ be a monster model. We denote the valued ring of $\C((t))$ by $\cO$, the maximal ideal of $\cO$ by $\m$ and $\Gamma$. On arbitrary model $M$, we denote the corresponding ones by $\cO_M$ and $\m_M$. We denote the map of $x\mapsto x/\m_\M$ for $x\in \cO_\M$ by $\res$.
Note that $k_M:=\res(\cO_M)$ is the residue field of $M$ which is algebraically closed. 

\section*{Acknowledgements}
Thanks to Yang Yang and Ningyuan Yao for checking some details and beneficial discussions. 

\section{Definable compactness}\label{sect-def-cpt}

One can find the definition and basic properties of definable compactness in \cite{JY-1} Definition 2.1, Fact 2.2 and Remark 2.3. We first show that $\cO$ is definably compact. Before that we recall some easy results. Firstly, for every definable subset $X$ of $\cO$, $\res(X)$ is finite or cofinite in $\C$ and therefore definable in $\ACF_0$ (Theorem 1 \cite{LY}). Secondly, from \cite{LY} Section 3, on each model $M$, there is a unique type $p_{\trans,M}\in S_\cO(M)$ such that $\res(p_{\trans,M})$ is the  transcendental type over $k_M$ in $\ACF_0$. Moreover, $p_{\trans, \M}$ is generically stable and both dfg and fsg type for $(\cO,+)$. Thirdly, we have

\begin{lemma}[Without any assumption for theories]
Let $\F$ be a definable family in $M$ with F.I.P. (finite intersection property) and $\F\subset p\in S(M)$.  Let $M\prec N$ and $\bar p\in S(N)$ be an heir of $p$. Then $\F(N)\subset \bar p$. 
\end{lemma}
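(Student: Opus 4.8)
The plan is to unwind the definition of heir and apply it to a single negated instance of the defining formula, the only real subtlety being the bookkeeping of the index set. Fix a presentation of the definable family as $\F = \{\phi(x,b) : b \in Y(M)\}$, where $\phi(x,y)$ is an $L$-formula and $Y$ is an $M$-definable set cut out by some $L(M)$-formula $\sigma(y)$; then $\F(N) = \{\phi(x,b) : b \in Y(N)\}$ is the same family reindexed by the parameters available in $N$. First I would observe that once we are handed $p \in S(M)$ with $\F \subset p$, the finite intersection property of $\F$ is automatic from the consistency of $p$. So the F.I.P. hypothesis is present only to guarantee that such a $p$ exists and plays no further role in the argument.

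Next I would recall that $\bar p \in S(N)$ being an heir of $p = \bar p|_M$ means precisely: for every $L(M)$-formula $\theta(x,y)$ and every tuple $c$ from $N$, if $\theta(x,c) \in \bar p$ then $\theta(x,c') \in p$ for some tuple $c'$ from $M$.

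The main argument proceeds by contradiction, one instance at a time. Fix $b \in Y(N)$ and suppose $\phi(x,b) \notin \bar p$; since $\bar p$ is complete, $\neg\phi(x,b) \in \bar p$. Consider the $L(M)$-formula $\theta(x,y) := \neg\phi(x,y) \wedge \sigma(y)$. Because $b \in Y(N)$ we have $N \models \sigma(b)$, so $\theta(x,b)$ and $\neg\phi(x,b)$ are equivalent given $\sigma(b)$, whence $\theta(x,b) \in \bar p$. Applying the heir property to $\theta$ and $b$ produces some $b' \in M$ with $\theta(x,b') \in p$, that is $\neg\phi(x,b') \wedge \sigma(b') \in p$. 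Since $\sigma(b')$ is a sentence over $M$, consistency of $p$ forces $M \models \sigma(b')$, i.e. $b' \in Y(M)$, and simultaneously $\neg\phi(x,b') \in p$. But $b' \in Y(M)$ gives $\phi(x,b') \in \F \subset p$, contradicting $\neg\phi(x,b') \in p$.

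Hence $\phi(x,b) \in \bar p$ for every $b \in Y(N)$, which is exactly $\F(N) \subset \bar p$. The one place that requires care — and the only genuine obstacle — is the passage from the heir witness $b'$ back into the index set $Y(M)$. This is precisely why I would fold the defining condition $\sigma(y)$ of the index set into the formula $\theta$, rather than apply the heir property to $\neg\phi$ alone: the latter would only return a parameter $b'$ with no guarantee that $\phi(x,b') \in \F$, leaving the contradiction unreachable.
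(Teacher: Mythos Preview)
Your proof is correct and follows essentially the same approach as the paper: both fold the index-set condition ($\sigma$ in your notation, $\theta$ in the paper's) into the negated formula before applying the heir property, obtaining a witness $b' \in M$ that simultaneously lies in the index set and violates $\F \subset p$. Your write-up is in fact more explicit than the paper's about why this folding is necessary.
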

\begin{proof}
Assume that $\F=\{\varphi(M;b):b\in M, M\models\theta(b)\}$ for some $\varphi(x;y), \theta(y)\in L_M$. Let $\chi(x;y)$ be $(\neg\varphi(x;y))\wedge \theta(y)$. If $\F(N)$ is not contained in $\bar p$, then there is $b^*\in N$ such that $\chi(x;b^*)\in \bar p$. As $\bar p$ is heir over $M$, there is $b'\in M$ such that $\chi(x;b')\in p$ which contradicts to $\varphi(x;b')\in p$.
\end{proof}

Then we have 
\begin{lemma}
$\C[[t]]$ is definably compact.  
\end{lemma}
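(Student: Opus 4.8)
The plan is to verify definable compactness of $\cO$ through the finite–intersection criterion of \cite{JY-1}, used in the following form: it suffices to show that every definable family $\F$ of closed subsets of $\cO$ with F.I.P.\ has a point of $\cO_\M$ in its intersection, equivalently that every definable type concentrating on $\cO$ admits a limit in $\cO$. So first I would fix such a type $p$ over a small model $M$ with $\cO \in p$, together with the family $\F$ of closed sets that it contains.

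Next I would transport $p$ to the monster. Let $\bar p \in S(\M)$ be an heir of $p$; by the F.I.P.--heir lemma proved just above, the whole (now $\M$--indexed) family $\F(\M)$ is contained in $\bar p$. Thus any realization $c$ of $\bar p$ satisfies $\val(c) \ge 0$ (since $\cO \in \bar p$) and lies in every closed set of $\F(\M)$. The remaining task is to pull such a $c$ back to an honest point of $\cO_\M$, i.e.\ to produce a standard part. I would build it along the valuation filtration $\cO \supseteq \m \supseteq \m^2 \supseteq \cdots$: reading off the successive residues of $c$ in the algebraically closed residue field and realizing them in $k_\M$, I assemble an element $a = \sum_i a_i t^i \in \cO_\M = \C[[t]]$. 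Because $\C((t))$ is spherically complete and the members of $\F$ are closed, $a$ should then lie in $\bigcap \F(\M)$, witnessing the limit.

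The hard part will be the residue--field direction. In the $p$--adic picture the residue field is finite and $\cO$ is genuinely compact, so standard parts are automatic; here the residue field is the infinite field $\C$ (resp.\ $k_\M$), and a definable type on $\cO$ may have transcendental residue, so the naive stage-by-stage realization can stall when a residue is transcendental over the base. This is exactly the direction governed by the unique transcendental--residue type $p_{\trans,\M}$, and I expect to use its generic stability (equivalently, that it is both dfg and fsg for $(\cO,+)$), together with the fact from Theorem~1 of \cite{LY} that $\res(X)$ is finite or cofinite, to show that escaping in this (stable) residue direction is compatible with, rather than an obstruction to, having a limit. By contrast, the value--group direction should be routine: closedness of $\cO$ and the bound $\val \ge 0$ prevent the type from running off in $\Gamma$, and spherical completeness then supplies the valuation part of the limit.
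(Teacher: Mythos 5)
Your overall architecture matches the paper's: pass to an heir via the F.I.P.--heir lemma, split according to whether the residue direction is transcendental or realized, and iterate along the filtration $\cO\supseteq\m\supseteq\m^2\supseteq\cdots$, using closedness to capture the limit $a=\sum_i a_it^i$ at the end. But there are two genuine problems. First, your opening reduction --- that definable compactness is ``equivalently that every definable type concentrating on $\cO$ admits a limit in $\cO$'' --- is false in this setting and would in fact yield the opposite conclusion: the type $p_{\trans,\C((t))}$ contains the clopen set $\cO\setminus(b+\m)$ for \emph{every} $b\in\cO$, so it has no limit point in $\cO$ whatsoever. The lemma is only true for the directed-family formulation (the one in Definition 2.1 of \cite{JY-1}, which the paper uses), and directedness of $\F$ is an essential hypothesis, not a convenience; replacing it by bare F.I.P. or by ``limits of definable types'' breaks the argument exactly in the transcendental case.

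Second, and relatedly, you leave the crux unproved. In the transcendental-residue case there is no standard part to extract from a realization $c$ of $\bar p$ (its residue is transcendental over $\C$, so the ``stage-by-stage realization'' does not merely stall --- it never starts), and the intersection point is \emph{not} obtained as any kind of limit of the type. The paper's actual argument is a counting argument: membership of $X\in\F(\M)$ in $\bar p=p_{\trans,\M}$ forces $X\supseteq\cO_\M\setminus\bigcup_{c\in\Delta_X}(c+\m_\M)$ for a finite set $\Delta_X$ of residues; the condition $|\Delta_X|\le n$ is definable, so by compactness there is a uniform bound $N$; directedness then forces $\bigcup_{X\in\F}\Delta_X$ to be finite, so any untouched residue ball lies in $\bigcap\F$. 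Your appeal to generic stability of $p_{\trans,\M}$ ``to show that escaping in this direction is compatible with having a limit'' is not a substitute for this; as it stands it is a hope, not a proof. (A smaller point: in the convergent case the coefficients $a_i$ must be taken in $\C$, as points of $\bigcap\bar\F_i$ when that family has a realized residue, so that $\sum_ia_it^i$ is an honest element of $\C[[t]]$; ``realizing residues of $c$ in $k_\M$'' and summing does not produce an element of the model.)
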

\begin{proof}
Let $\F$ be a definable directed system of closed subset of $\C[[t]]$. Then $\bar\F:=\{X/\m:X\in \F\}$ is a collection of definable subsets of $\C$ (in $\ACF_0)$) with F.I.P. 

If $\bar\F$ is contained in the transcendental type over $\C$ (in $\ACF_0$), then $\F$ is contained in $p=p_{\trans,\C((t))}$, the generic type of $(\cO,+)$ over $\C((t))$. Let $\bar p$ be the global heir of $p$. Clearly, $\res(\bar p)$ is the transcendental type over the residue field of $\M$ and $\bar p=p_{\trans,\M}$.
As $\bar p$ is heir over $\C((t))$, we have $\F(\M)\subset \bar p$. 
Then it is easy to see that each $X\in\F(\M)$ there is a finite subset $\Delta_X$ of $\res(\M)$ such that 
 $X\supset \cO_\M\backslash (\bigcup_{c\in \Delta_X} c+\m_\M)$.
Note that for each $n$, $|\Delta_X|\leq n$'' can be written as 
$$\forall x_1\in\cO_\M,\dots,\forall x_{n+1}\in \cO_\M(\bigwedge_{ j\neq k} \res(x_j)\neq \res(x_k))\rightarrow
\bigvee_{i=1}^{n+1}(X\cap (x_i+\m_\M)=x_i+\m_\M)$$
Let $D_n:=\{X\in \F(\M): |\Delta_X|\leq n\}$. Then $D_n$ is a definable family and $\F(\M)=\bigcup_n D_n$. By compactness, $\F(\M)=D_N$ for some $N\in\N$. 
Then $|\Delta_X|\leq N$ for every $X\in\F$. As $\bar\F$ is directed, we have that $\bigcap \F\neq \emptyset$.

If $\bar\F$ is not contained in the transcendental type over $\C$, then there is $a_0\in \C$, such that $a_0\in \bigcap \bar\F$.  Then we let $\F_1=\{t^{-1}(\m \cap (X-a_0)):X\in \F\}$. If $\bar \F_1$ is contained in the transcendental type over $\C$, we have done because $x\mapsto tx$ is an isomorphism of additive groups of $\C[[t]]$ to $\m$. If not, there is $a_1\in \bigcap \bar \F_1$ and we let $\F_2==\{t^{-1}(\m \cap (X-a_1):X\in \F_1\}$

Inductively, if in some step, $\bar \F_i$ is contained in the transcendental type over $\C$, then we have done. If not, we can find $a_{i}\in \bigcap \bar \F_i$ and let $\F_{i+1}==\{t^{-1}(\m \cap (X-a_i)):X\in \F_i\}$. Then we will have a sequence $(a_i)_i$ of $\C$. Let $a=\sum_i a_i t^i$. As each $X\in\F$ is closed, we have that $a\in \bigcap \F$.
\end{proof}

\begin{theorem}\label{def-cpt-aff}
A definable set $X$ in $\C((t))^n$ (or $\M^n$) is definably compact iff it is closed and bounded.
\end{theorem}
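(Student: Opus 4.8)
The plan is to prove both directions from the definition of definable compactness via definable directed families of nonempty closed sets. For the forward direction I show that failure of either closedness or boundedness produces such a family with empty intersection, contradicting definable compactness. If $a\in\overline X\setminus X$, consider the sets $X\cap B_\gamma(a)$, where $B_\gamma(a)=\{x\in\M^n:\val(x_i-a_i)\ge\gamma\ \forall i\}$ is the closed ball about $a$. These form a definable family indexed by $\gamma\in\Gamma$ (with parameter $a$) that is directed (shrinking as $\gamma$ grows), has all members nonempty (since $a\in\overline X$), and has empty intersection, as $\bigcap_\gamma B_\gamma(a)=\{a\}$ and $a\notin X$. This contradiction shows $X$ is closed; it is just the fact that definably compact subsets of the Hausdorff space $\M^n$ are closed.

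For boundedness, recall that $X$ is bounded iff $\val(x_i)\ge\gamma_0$ for some fixed $\gamma_0\in\Gamma$, all $x\in X$ and all $i$. If $X$ is unbounded, put $C_\gamma=\{x\in X:\val(x_i)\le\gamma\text{ for some }i\}$. Because $\cO$ is definable in the language of rings, $\{C_\gamma\}_{\gamma\in\Gamma}$ is a definable family; each $C_\gamma$ is closed (a finite union of clopen sets), the family is directed (decreasing as $\gamma$ decreases), every member is nonempty by unboundedness, while $\bigcap_\gamma C_\gamma=\emptyset$ because the valuation of any element lies in $\Gamma\cup\{\infty\}$ and so cannot be $\le\gamma$ for all $\gamma$. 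Again this contradicts definable compactness, so $X$ is bounded.

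For the reverse direction I will use two facts that are immediate from the definition: a closed subset of a definably compact set is definably compact, and a definable homeomorphism preserves definable compactness (in particular so does multiplication by a nonzero scalar). Given $X$ closed and bounded, fix $\gamma_0$ as above and $u\in\M$ with $\val(u)=-\gamma_0$ (such $u$ exists as $\val$ is onto $\Gamma$); then $X\subseteq B_{\gamma_0}^n$, where $B_{\gamma_0}=\{x:\val(x)\ge\gamma_0\}$ is closed, so $X$ is closed in $B_{\gamma_0}^n$. Since $x\mapsto ux$ is a definable homeomorphism $B_{\gamma_0}\to\cO$, it suffices to prove that $\cO^n$ is definably compact; the closed-subset fact then yields the theorem.

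The \emph{main obstacle} is the definable compactness of $\cO^n$, i.e.\ that the finite power of the already-established definably compact set $\cO$ is definably compact. I expect this to be the only genuinely technical step. One route is a Tychonoff-style product lemma proved by induction on $n$, with base case the preceding lemma: given a definable directed family of nonempty closed subsets of $\cO^{n}=\cO^{n-1}\times\cO$, project to $\cO^{n-1}$, take closures, and apply the inductive hypothesis to locate a common point $x_0$; the delicate issue is that projections of closed sets need not be closed, which must be repaired using directedness and definable choice so that the fibers over $x_0$ are simultaneously nonempty. An alternative, parallel to the note's proof that $\cO$ is definably compact, is to run that residue-field induction in all $n$ coordinates at once: replace the transcendental type over $\C$ by a suitable generically stable type in $n$ variables over $\C$ in $\ACF_0$, and assemble a common point $\sum_i\bar a_i t^i\in\cO^n$ (with $\bar a_i\in\C^n$) by the same coordinatewise exhaustion, the heir lemma above again ensuring that the lifted type meets every member of the family. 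I anticipate the fiber subtlety in the product lemma, or equivalently the bookkeeping in this $n$-variable residue argument, to be where the real work lies.
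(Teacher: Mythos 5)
Your argument is essentially the paper's: the forward direction is treated as routine, and the converse reduces $X$ to a closed subset of a finite product of balls $a+c\cO_\M$, each definably homeomorphic to the definably compact set $\cO_\M$, then invokes stability of definable compactness under homeomorphisms, closed subsets, and finite Cartesian products. The product step you single out as the main obstacle is exactly what the paper imports as a ``basic property'' from Fact 2.2 of \cite{JY-1} rather than proving, so your Tychonoff-style induction (with the fiber repair via directedness) is only needed if one insists on a self-contained treatment.
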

\begin{proof}
By Remark 2.3 \cite{JY-1}, it suffices to deal with the global version which will meet more parameters. Note that the direction of $\Rightarrow$ is trivial. For the other direction, we note that the maps of $x\mapsto a+cx$ is a homeomorphism of $\cO_\M\rightarrow a+c\cO_\M$ for every $a\in\M$ and $c\in\M^*$. Then apply the basic properties on homeomorphisms, closed subspaces and  Cartesian products.
\end{proof}

\begin{remark}\label{flag}
For some examples, we have that the $n$-dimensional projective space $\PP^n$ and the space of complete $n$-flags are definably compact (see Fact 2.2 \cite{JY-1}).
\end{remark}

\section{Topology on definable groups}\label{sect-group}

In this section, we work in $\M$ and let $M$ be a small model. Note that some comments are basic on \cite{PS} which also work for other dp-mimial fields.

Note that for two definable sets $Y\subset X$, we say that $Y$ is  \textbf{large} in $X$ if $\dim X\backslash Y<\dim X$. We use the concepts of \textbf{correspondence} ($f: X\rightrightarrows Y$) and its 
continuity from \cite{PS}. The correspondence $f$ is called an \textbf{$e$-(resp. $e_\leq$-)correspondence} when $|f(x)|=e$(resp. $\leq e$) for each $x\in X$. It is easy from compactness that every correspondence is an $e_\leq$-correspondence for some $e\in\N$. Moreover, we have 

\begin{lemma}\label{e-corres}
Let $U\subset \M^n$ be definable and  open and $f: U\rightrightarrows \M^l$ be a correspondence. Then there are finitely many pairwise disjoint $U_i\subset_\text{open} U$ and natural numbers $e_i$ such that $f|_{U_i}$ is an $e_i$-correspondence and $\biguplus_i U_i$ is large in $U$.
\end{lemma}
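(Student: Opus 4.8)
The plan is to stratify $U$ according to the cardinality of the fibers of $f$ and then pass to interiors. Since $f$ is an $e_\leq$-correspondence, for each $x\in U$ we have $1\leq |f(x)|\leq e$. For $1\leq k\leq e$ set
$$V_k:=\{x\in U: |f(x)|=k\}.$$
Each $V_k$ is definable, since the graph of $f$ is definable and counting the (finitely many) points in a fiber is a definable condition, and $U=\biguplus_{k=1}^e V_k$ is a partition of $U$ into definable pieces.

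Next I would put $U_k:=\Int(V_k)$ for $1\leq k\leq e$. Because $U$ is open we have $\Int(V_k)\subseteq U$, so each $U_k$ is an open subset of $U$; they are pairwise disjoint because the $V_k$ are; and by construction $f|_{U_k}$ is a $k$-correspondence. Discarding the empty ones, the nonempty $U_k$ give finitely many pairwise disjoint open pieces of $U$ on which $f$ has constant fiber count $e_i=k$. It then remains only to verify largeness.

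For largeness, observe that $U\setminus\biguplus_k U_k=\biguplus_k \bigl(V_k\setminus\Int V_k\bigr)$. Each set $V_k\setminus\Int V_k$ has empty interior: if a nonempty open $W$ were contained in it, then $W\subseteq V_k$ would force $W\subseteq\Int V_k$, contradicting $W\cap\Int V_k=\emptyset$. Since $U$ is open we have $\dim U=n$, and the leftover set is a finite union of definable sets each with empty interior.

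The main obstacle, and the single geometric input of the argument, is the fact that in $\M=\C((t))$ with the valuation topology a definable subset of $\M^n$ with empty interior has dimension strictly less than $n$ (equivalently, a full-dimensional definable set has nonempty interior), which is the standard dimension-versus-interior principle for such dp-minimal valued fields. Granting this, each $V_k\setminus\Int V_k$ has dimension $<n=\dim U$, hence so does their finite union, and therefore $\biguplus_k U_k$ is large in $U$. This completes the proof.
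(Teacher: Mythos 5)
Your argument is correct, and it reaches the conclusion by a somewhat different route than the paper's. The paper proceeds by induction on the bound $e$ of the $e_\leq$-correspondence: it peels off only the top stratum $X=\{x\in U:|f(x)|=e\}$, keeps $V=\Int X$, passes to the open set $W=U\setminus\cl(X)$ on which $f$ is an $(e-1)_\leq$-correspondence, and recurses. You instead stratify all at once by fiber cardinality, take $U_k=\Int V_k$, and dispose of largeness in a single step. Both proofs ultimately rest on the same geometric input: the paper needs that $\cl(X)\cap U\setminus\Int X$ (the frontier of a definable set inside $U$) has dimension $<n$, while you need that a definable subset of $\M^n$ with empty interior has dimension $<n$; these are equivalent forms of the dimension-versus-interior principle from the tame topology of dp-minimal fields in \cite{PS}, which applies to $\C((t))$. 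Your version has the merit of making that input explicit and avoiding the induction, which buys nothing here; the only points worth recording are that the case $U=\emptyset$ is trivial and that some of the $U_k$ may be empty and are simply discarded, as you note.
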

\begin{proof}
Let $f$ is an $e_\leq$-correspondence. Induction on $e$. When $e=1$, it is clear. Assume that $e>1$. We let $X=\{x\in U: |f(x)|=e\}$, $V=\Int X$ and $W=U\backslash \cl(X)$. If $\dim W<\dim U$, then $V\subset_\text{open} U$ is large and it is done. Now assume that $\dim W=\dim U$.
By induction, we have wanted $U_i$ for $f|_{W}$. Then these $U_i$ together with $V$ are what we want for $f$.  
\end{proof}

Then we have a manifold-like structure of finite type.

\begin{lemma}\label{key-lemma}
Let $X\subset \M^n$ be an $M$-definable set and $f: X\rightarrow \M$ be an $M$-definable function. We can find $O$ an $M$-definable large subset of $X$, and a finite partition of $O$ by $M$-definable subsets $O_i$, natural numbers $e_i,r_i$, $M$-definable $e_i$-to-one continuous and open functions $\pi_i: O_i\rightarrow \pi(O_i)=U_i\subset_\text{open} \M^n$ and $M$-definable continuous $r_i$-correspondences $f_i: U_i\rightrightarrows\M$ such that $f_i\circ\pi_i=f$ on $O_i$.
\end{lemma}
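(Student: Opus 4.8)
The plan is to produce the factorisation one coordinate patch at a time, using the dimension theory of $\M$ together with generic continuity and openness of definable maps. Write $d=\dim X$. First I would throw away the locus of $X$ of local dimension $<d$; this set has dimension $<d$, hence is small, so I may assume $X$ is pure of dimension $d$. By the standard dimension theory for these valued fields, after permuting coordinates some coordinate projection $p\colon\M^n\to\M^{d}$ is finite-to-one on a large subset $X'\subseteq X$; restricting $p$ to $X'$ gives a finite-to-one definable map into $\M^{d}$.

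Next I would make this projection tame. By compactness the fibres of $p|_{X'}$ are of bounded size, and each locus $\{x\in X': |p^{-1}(p(x))\cap X'|=e\}$ is definable, so partitioning along these loci yields finitely many pieces on each of which $p$ is exactly $e$-to-one. On each such piece, generic continuity of definable functions (as in \cite{PS}) lets me pass to a large subset on which $p$ is continuous, and the analogue of invariance of domain---a continuous finite-to-one definable map between $d$-dimensional definable sets is open on a large subset---lets me shrink further to a large subset on which $p$ is also open, with open image $U_i\subset_{\text{open}}\M^{d}$. I take these shrunk pieces to be the $O_i$ and the restricted projections to be the desired $e_i$-to-one, continuous, open maps $\pi_i\colon O_i\to U_i$.

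With $\pi_i$ in hand I would push $f$ forward along the fibres: for $u\in U_i$ set $f_i(u)=\{f(x): x\in\pi_i^{-1}(u)\}$. Since $\pi_i$ is $e_i$-to-one this is an $e_{i,\le}$-correspondence, and by construction $f_i\circ\pi_i=f$ on $O_i$. Applying Lemma~\ref{e-corres} to each $f_i$ refines $U_i$, up to a large open subset, into open pieces on which $f_i$ is an exactly-$r$-correspondence; generic continuity of correspondences---which follows by locally splitting each $r$-correspondence into $r$ single-valued definable branches on a large open subset and applying generic continuity of definable functions to each---then lets me shrink once more so that each $f_i$ is continuous. Pulling these refinements back through $\pi_i$ is harmless because $\pi_i$ is finite-to-one, so the $\pi_i$-preimage of a set of dimension $<d$ again has dimension $<d$; thus the corresponding pieces of $O_i$ remain large. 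Relabelling the resulting pieces gives the finite partition $\{O_i\}$ of a large $O\subseteq X$ together with the data $e_i,r_i,\pi_i,U_i,f_i$.

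The step I expect to be the main obstacle is the openness of $\pi_i$: the partition into $e$-to-one pieces, the pushforward defining $f_i$, and the application of Lemma~\ref{e-corres} are all formal, and generic continuity is available from \cite{PS}, but the openness (local-homeomorphism) statement is the genuinely geometric input and must be extracted from the tame topology of $\M$, via the inverse-function-theorem-type behaviour of finite definable maps in dp-minimal valued fields. A secondary nuisance is the bookkeeping: each of the several passages to a large subset has to be checked to preserve largeness after pullback through a finite map, which is exactly where purity of dimension of $X$ is used.
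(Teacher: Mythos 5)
There is a genuine gap at the very first step. You claim that after discarding the locus of smaller local dimension, a \emph{single} coordinate projection $p\colon\M^n\to\M^d$ is finite-to-one on a large subset of $X$. This is false even for pure-dimensional sets: take $X=(\{0\}\times\M)\cup(\M\times\{0\})\subset\M^2$, which is pure of dimension $1$. Any large subset $X'$ still contains infinitely many points of each line (only a $0$-dimensional set may be removed), so the first-coordinate projection has an infinite fibre over $0$ and the second-coordinate projection has an infinite fibre over $0$; no single coordinate projection is finite-to-one on a large subset, and your subsequent stratification by fibre cardinality cannot repair this, since on the bad line the fibre is infinite rather than of some finite size $e$. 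What is true, and what the actual argument requires, is a preliminary \emph{finite definable partition} of $X$ into pieces each admitting its own coordinate projection. The paper obtains this by working pointwise: for each $a\in X$ it chooses a subtuple of coordinates independent over $M$ with $a\in\acl(M,\pi_a(a))$, extracts a formula $\varphi$ witnessing the algebraicity together with the exact fibre size $e_a$, and uses compactness to cover $X$ by finitely many such loci. Your proposal skips exactly this step, and it is the heart of the lemma.

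The rest of your outline (pushing $f$ forward along fibres, applying Lemma~\ref{e-corres} to get exact correspondences, and invoking generic continuity from \cite{PS}) matches the paper. One further remark on the point you flag as the main obstacle: you do not need an invariance-of-domain theorem. Once $\pi_i^{-1}$ has been made a \emph{continuous} $e_i$-correspondence on an open $U_i$ (via Lemma~\ref{e-corres} and generic continuity of correspondences), openness of $\pi_i$ on $\pi_i^{-1}(U_i)$ follows directly from the lower semicontinuity built into the definition of a continuous correspondence: for $V$ open, $\pi_i(V)=\{u:\pi_i^{-1}(u)\cap V\neq\emptyset\}$ is open. So the geometric input you were worried about is already contained in the tools you cite; the missing idea is the $\acl$-plus-compactness decomposition.
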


\begin{proof}
Let $X\subset \M^n$ be an $M$-definable set and $f: X\rightarrow \M^l$ be an $M$-definable function. For every $a\in X$, there is a partial coordinate projection $\pi_a:(x_1,\dots,x_n)\mapsto (x_{i_1},\dots,x_{i_d})$ such that the coordinate components of $\pi(a)$ are independent over $M$ and $a\in \acl(M,\pi(a))$.\footnote{If $a\in \acl(M)$, we let $\pi_a=0: \M^n\rightarrow \{0\}$.}
Then there is a $L_M$-formula $\varphi(x;y)$ such that $\varphi(a;\pi_a(a))$ and $e_a:=|\varphi(\M; \pi_a(a))|\in\N$. 
Then we let $$\psi_a(x):=\varphi(x;\pi_a(x))\wedge\text{``}|\varphi(\M;\pi(x))|=e_a\text{''}$$
Then $(\psi_a(\M))_{a\in X}$ covers $X$. As every $\psi_a$ is in $L_M$, the cover has a small index. By compactness, there is a finite cover $(\psi_{a_i}(\M))_i$. 
We denote $\psi_{a_i}$ by $\psi_i$ and we can modify these $\psi_i$ to ensure they pairwise disjoint. We denote $\varphi_{a_i}$ by $\varphi_i$. We denote $\pi_{a_i}$ by $\pi_i: \M^n\rightarrow \M^{d_i}$. We denote $e_{a_i}$ by $e_i$. It is clear that each $\pi_i$ is continuous, open and $(e_i)_\leq$-to-one\footnote{It means that at most $e_i$ many points have the same image.} on $\psi_i(\M)$.

Assume that $\dim X=d$. Then $\max_i d_i=d$. Note that for $a\in \psi_i(\M)$, $\dim(a/M)\leq d_i$.
Let $I=\{i:d_i=d\}$. Then for $i\in I$, we let $V_i=\Int(\pi_i(\psi_i(\M)))\subset \M^d$. It is clear that $\pi^{-1}:V\rightrightarrows X$ is a continuous correspondence. By Lemma \ref{e-corres}, we can assume that, without loss of generality (By partitioning $V_i$ into more pieces if necessary), there is $W_i\subset_\text{open} V_i$ on which $\pi_i^{-1}$ is an $e_i$-correspondence.
Then $f_i: W_i\rightrightarrows \M^l: u\mapsto \{f(x):x\in W_i\cap f^{-1}(u)\} $ is also a correspondence definable over $M$. By By Lemma \ref{e-corres}, we also assume that, without loss of generality (by taking new $W_i$), $f_i$ is an $r_i$-correspondence. Then by  Proposition 3.7 \cite{PS}, there is an $M$-definable open and large subset $U_i$ of $W_i$ such that $f_i$ is continuous on $U_i$. Let $O_i=\pi^{-1}(U_i)$ for $i\in I$ and $O=\biguplus_{i\in I}O_i$. Then it is easy to see that $O$ is large in $X$.

The above $O,O_i,U_i,e_i, r_i$ and $f_i$ are what we need.
\end{proof}

\begin{remark}\label{mfd}
Let notations be as above. It is clear that $f$ is continuous on $O$, a large subset of $X$. We call the structure given by those data on $O$ a \textbf{weak manifold}.

Let $a\in O_i$ and $u=\pi_i(a)$. As $\pi_i^{-1}: U_i\rightrightarrows O_i$ is a continuous $e_i$-correspondence, by Lemma 3.2 \cite{PS}, there is an open neighborhood $V_u$ of $u$ on which
$\pi_i^{-1}$ is split into definable functions $g_1,\dots,g_{e_i}$.\footnote{They can be defined over $\acl(M,u)$ and possibly can not defined over $M$.} Then there is a unique $1\leq j\leq e_i$ such that $\theta_a=g_j$ and  $a\in \theta_a(V_u):=W_a$. It is clear that $\pi_i\circ \theta_a=\id_{W_a}$ and $\theta_a\circ \pi_i=\id_{V_u}$. Hence, $\theta_a^{-1}:W_a\cong V_u$ gives local coordinates around $a$.
Let $a, b\in O$. If $W_a\cap W_{b}\neq \emptyset$, then $a,b$ are in the same $O_i$. The transfer map $$V_{\pi_i(a)}\supset \pi_i(W_{a}\cap W_{b})\stackrel{\theta_a}\longrightarrow W_a\cap W_b\stackrel{\theta_b^{-1}(=\pi_i)}\longrightarrow \pi_i(W_a\cap W_b)\subset V_{\pi_i(b)}$$
is obviously continuous. Hence, we can give a structure of manifold (but with infinitely many affine pieces) on $O$, a large subset of $X$.
\end{remark}

Modifying the way how A. Pillay gave a definable group a Lie structure, we have 

\begin{theorem}
Every definable group over $M$ has a \textbf{weak Lie structure} over $M$, namely, a weak manifold structure over $M$ on which the group operations are continuous. 
\end{theorem}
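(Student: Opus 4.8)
The plan is to mimic Pillay's construction of a Lie structure, replacing honest charts by the finite-to-one correspondences supplied by Lemma \ref{key-lemma}. Write the group as $(G,\cdot)$ with $G\subseteq\M^n$ definable over $M$, and set $d=\dim G$. First I would apply the construction of Lemma \ref{key-lemma} to $X=G$ (with, say, $f$ a coordinate function) to obtain, as in Remark \ref{mfd}, a weak manifold structure on a large $M$-definable subset $O\subseteq G$, with local charts $\theta_a^{-1}\colon W_a\cong V_{\pi_i(a)}$ around each $a\in O$ and continuous transition maps. This equips $O$ with a definable topology $\tau_O$ in which the chart maps are homeomorphisms onto open subsets of $\M^d$.

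Next I would spread this topology over all of $G$ by left translations. For each $g\in G$ the map $\lambda_g\colon x\mapsto g\cdot x$ is a definable bijection, so $gO$ is again large in $G$ and inherits charts of the form $\theta_a^{-1}\circ\lambda_{g^{-1}}$. Since for any $h\in G$ and any $v\in O$ we have $h=(hv^{-1})v\in (hv^{-1})O$, the family $\{gO:g\in G\}$ covers $G$, so these charts assemble into an atlas on the whole group. The content is then to verify that the atlas is compatible: on an overlap $gO\cap g'O$ the transition map equals $\theta_b^{-1}\circ\lambda_{g'^{-1}g}\circ\theta_a$, so it is built from left multiplication by the fixed element $c=g'^{-1}g$, and compatibility amounts to continuity of $\lambda_c$ in the $\tau_O$-charts.

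The key input here is generic continuity: left multiplication by a fixed $c\in G$ is a definable bijection of $G$, hence by Proposition 3.7 \cite{PS} (applied to the associated correspondence read off in the weak manifold charts) it is continuous on some large open subset. To upgrade generic continuity to continuity at every point I would run the standard homogeneity argument: the set of points at which a given group operation is continuous is $M$-definable and large, and once translations are known to be homeomorphisms it is invariant under the transitive action of $G$ on itself, so a generic translate moves any prescribed point into the locus of continuity. Carrying out this bootstrapping --- simultaneously establishing that translations are homeomorphisms and that the topology is translation-invariant --- is the main obstacle, and it is precisely here that the correspondence formalism must be handled with care, since the charts are only $e_i$-to-one and one must track the splitting of $\pi_i^{-1}$ into local branches (as in Remark \ref{mfd}) to make sense of continuity of the transition correspondences.

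Finally, with translation-invariance of the resulting topology $\tau$ in hand, continuity of multiplication $m\colon G\times G\to G$ and of inversion follows by the same pattern: both are definable, hence generically continuous as correspondences by Proposition 3.7 \cite{PS}, and homogeneity propagates continuity from a generic point of $G\times G$ to all of $G\times G$. I would conclude by observing that all the data --- the charts, the covering translates, and the verification of continuity --- are definable over $M$, so the weak manifold structure and the continuity of the group operations are witnessed over $M$, yielding the desired weak Lie structure.
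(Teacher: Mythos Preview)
Your proposal is correct and follows essentially the same approach as the paper: the paper's proof is literally the single sentence ``Modifying the way how A.\ Pillay gave a definable group a Lie structure, we have [the theorem],'' and your sketch is precisely a fleshing-out of that modification---build the weak manifold on a large $O\subseteq G$ via Lemma~\ref{key-lemma}, propagate by translations, and use generic continuity (Proposition~3.7 of \cite{PS}) together with the usual homogeneity argument to obtain continuity of the group operations everywhere. In fact you supply considerably more detail than the paper does.
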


Note that a weak manifold is divided into finitely many pieces and each piece is mapped to an open subset of the affine space by a finite-to-one, continuous and open map. So the definable compactness on a weak manifold can be determined by those pieces and  determined by those corresponding affine open sets. Also, via those pieces, we can give a $\Gamma$-exhaustion (see Definition 2.6 \cite{JY-1}). Hence, we have

\begin{prop}
Every definable weak manifold admits a $\Gamma$-exhaustion.
\end{prop}

Then on a definable weak manifold,
we can talk about boundedness via a $\Gamma$-exhaustion.
It is easy from Theorem \ref{def-cpt-aff} that

\begin{prop}
A definable subset of a definable weak manifold is definably compact iff it is closed and bounded.
\end{prop}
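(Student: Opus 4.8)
The plan is to deduce the statement from the preceding Proposition together with two soft properties of definable compactness, using Theorem \ref{def-cpt-aff} only as the affine base case. Fix the weak manifold $O=\biguplus_i O_i$ with its $M$-definable finite-to-one continuous open charts $\pi_i\colon O_i\to U_i\subset_\text{open}\M^n$, let $\{K_\gamma\}_{\gamma\in\Gamma}$ be the $\Gamma$-exhaustion supplied by the previous Proposition, and let $Y\subseteq O$ be definable. By the definition of the exhaustion, each $K_\gamma$ is definably compact and ``$Y$ bounded'' means $Y\subseteq K_\gamma$ for some $\gamma$.

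For the direction $\Leftarrow$, suppose $Y$ is closed and bounded. Boundedness gives $Y\subseteq K_\gamma$ for some $\gamma$, and since $Y$ is closed in $O$ it is a closed subset of $K_\gamma$. I would then invoke the general fact that a closed definable subset of a definably compact set is definably compact: via the finite intersection property characterization, any directed definable family of nonempty closed-in-$Y$ subsets consists of sets that are closed in $O$, hence closed in $K_\gamma$, so definable compactness of $K_\gamma$ forces their intersection to be nonempty. For the direction $\Rightarrow$, I would use the two standard facts (recorded in the general theory of \cite{JY-1}) that a definably compact definable set is closed, and that a definably compact subset of $O=\bigcup_\gamma K_\gamma$ must lie inside a single level $K_\gamma$ of the increasing exhaustion, hence is bounded.

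The only point where Theorem \ref{def-cpt-aff} is genuinely needed is in justifying that the levels $K_\gamma$ are definably compact, i.e. in the base case of the soft facts above. Here I would unwind $K_\gamma$ on the finitely many charts: $K_\gamma\cap O_i$ is, by construction of the exhaustion, the $\pi_i$-preimage of a closed bounded definable subset of $U_i\subseteq\M^n$, which is definably compact by Theorem \ref{def-cpt-aff}. The main obstacle I expect is transferring definable compactness back across the finite-to-one (non-injective) chart $\pi_i$, since a closed bounded set downstairs need not have definably compact preimage for a merely continuous finite-to-one map. I would resolve this using the local splitting of $\pi_i^{-1}$ into $e_i$ continuous sections from Remark \ref{mfd}, which presents $O_i$ over $U_i$ as an $e_i$-sheeted structure, together with the finite intersection property argument used for $\C[[t]]$: a directed family of nonempty closed subsets upstairs pushes forward to a directed family with the finite intersection property in the definably compact image, and its limit point lifts through the finite fibre. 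The key is that $\pi_i$ is globally $e_i$-to-one with continuous sections, so closedness of the relevant sets prevents any preimage from escaping as one approaches the limit, boundedness (membership in $K_\gamma$) rules out escape ``to infinity,'' and the finiteness of the number of charts handles migration between pieces.
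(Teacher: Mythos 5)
Your proposal is correct and follows essentially the same route the paper intends: the paper offers no written proof beyond ``it is easy from Theorem \ref{def-cpt-aff}'' together with the earlier remark that definable compactness of a weak manifold is determined by its finitely many affine pieces and the $\Gamma$-exhaustion, and your reduction to the affine case through the charts and exhaustion levels is exactly that strategy. You in fact supply the details the paper omits, in particular correctly isolating and resolving (via the local splitting of $\pi_i^{-1}$ into continuous sections and the finiteness of the charts) the one genuinely nontrivial step, namely pulling definable compactness back through the finite-to-one chart maps.
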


Moreover, for definable weak Lie group, as in \cite{JY-1}, we have
\begin{prop}
Every definable group admits a good neighborhood basis (see Definition 2.27 \cite{JY-1} and for convenience, we can add Proposition 2.29 \cite{JY-1} into the definition).
\end{prop}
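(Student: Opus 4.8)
The plan is to transport the standard ball neighborhood basis of the origin in affine space to the identity of the group through a chart supplied by the weak Lie structure, and then to read off the axioms of Definition 2.27 \cite{JY-1} from continuity of the group operations together with definable compactness.

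First I would fix on a definable group $G$ the weak Lie structure over $M$ given by the theorem above, so that the group operations are continuous for the associated topology. Writing $O$ for the distinguished large set and using that left translations are homeomorphisms of $G$, I would translate a chart at some point of $O$ to the identity: this produces local coordinates $\theta\colon V\to W$, a definable homeomorphism from an open $V\subset_\text{open}\M^d$ (where $d=\dim G$) onto an open neighborhood $W$ of $e$, with $\theta(0)=e$ (cf. Remark \ref{mfd}).

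Next, in $\M^d$ I would use the clopen balls $B_\gamma:=\{x\in\M^d:\val(x_j)\geq\gamma\text{ for all }1\leq j\leq d\}$, $\gamma\in\Gamma$. These form a decreasing definable neighborhood basis of $0$ with $\bigcap_\gamma B_\gamma=\{0\}$, and for $\gamma\geq 0$ each $B_\gamma$ is closed and bounded, hence definably compact by Theorem \ref{def-cpt-aff}. Since $V$ is open and contains $0$, there is $\gamma_0\in\Gamma$ with $B_\gamma\subset V$ for $\gamma\geq\gamma_0$, and I would set $N_\gamma:=\theta(B_\gamma)\subset W$. As $\theta$ is a definable homeomorphism onto its image and the $B_\gamma$ are clopen, each $N_\gamma$ is a definable clopen neighborhood of $e$, the family $(N_\gamma)_{\gamma\geq\gamma_0}$ is decreasing with $\bigcap_\gamma N_\gamma=\{e\}$, and each $N_\gamma$ is definably compact by the preceding proposition characterizing definable compactness on weak manifolds.

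Finally I would extract the group-theoretic conditions. Continuity of multiplication at $(e,e)$ and of inversion at $e$ gives, for every $\gamma\geq\gamma_0$, some $\gamma'\geq\gamma$ with $N_{\gamma'}\cdot N_{\gamma'}\subset N_\gamma$ and $N_{\gamma'}^{-1}\subset N_\gamma$; replacing each $N_\gamma$ by $N_\gamma\cap N_\gamma^{-1}$ makes the basis symmetric. The closure-type clause imported from Proposition 2.29 \cite{JY-1} is then automatic, since clopenness gives $\cl(N_{\gamma'})=N_{\gamma'}\subset N_\gamma$ whenever $\gamma'>\gamma$. I expect the main obstacle to be organizational rather than conceptual: because the chart is only local, each product and inverse must be verified to land back in $W$ before continuity can be invoked, and one must confirm that definable compactness computed inside the chart coincides with definable compactness for the ambient $\Gamma$-exhaustion of $G$. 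The latter is precisely what the weak-manifold compactness proposition supplies, so the remaining work is the reindexing needed to fit the exact formulation in \cite{JY-1}.
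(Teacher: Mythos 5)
Your argument is correct and is essentially the construction the paper intends: the paper states this proposition without an explicit proof, deferring to \cite{JY-1}, and the argument there is exactly your chart-at-the-identity pullback of the valuation balls $B_\gamma$, with definable compactness of the balls supplied by Theorem \ref{def-cpt-aff} and transported through the local homeomorphism from Remark \ref{mfd}. The only point left implicit on both sides is the clause-by-clause match with Definition 2.27 and Proposition 2.29 of \cite{JY-1}, but your continuity, symmetry, and clopenness checks cover what is needed.
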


By a similar proof of Proposition 3.1 \cite{J}, we have

\begin{prop}
Let $G$ be a fsg group. Then $G$ is definably compact. 
\end{prop}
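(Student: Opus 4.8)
The plan is to prove that every fsg group $G$ is definably compact by contrapositive: assuming $G$ is not definably compact, I would produce a definable curve witnessing non-compactness and use it to contradict the fsg property. Recall that fsg means there is a global type $p$ such that every left translate of $p$ is finitely satisfiable in some fixed small model $M_0$; equivalently, $G$ admits a global $M_0$-finitely-satisfiable type, and no proper definable subset of $G$ can carry all the ``generic'' measure. The standing reference is Proposition 3.1 of \cite{J}, so my proof would be structured to mirror that argument, importing the weak Lie structure and the $\Gamma$-exhaustion established earlier in this section.

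First I would fix the weak manifold structure on $G$ given by the previous results, together with a $\Gamma$-exhaustion $(G_\gamma)_{\gamma\in\Gamma}$. By the proposition just proved, $G$ is definably compact if and only if $G$ is closed and bounded in the sense of this exhaustion; since $G$ is all of the ambient group it is trivially closed, so definable compactness reduces to boundedness, i.e.\ to $G = G_\gamma$ for some $\gamma$. Thus it suffices to show that an fsg group must be bounded. Suppose not; then the exhaustion is strictly increasing and unbounded, so I can extract a definable family of group elements $(a_\gamma)$ escaping every bounded piece, and more usefully a definable one-parameter subset tending to the ``boundary'' of the weak manifold.

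Next I would invoke the finite satisfiability. Let $p$ be a global type all of whose translates are finitely satisfiable in $M_0$, and let $a\models p$ with $a$ lying outside $G_\gamma$ for all $\gamma$ realized in the small model — this is possible precisely because $G$ is unbounded and $p$ is a genuinely generic type. Using the good neighborhood basis (the preceding proposition) together with Theorem~\ref{def-cpt-aff}, translation by such an escaping element moves a fixed bounded ``generic'' neighborhood entirely off itself, so that finite satisfiability in $M_0$ forces a realization of $p$ inside a bounded piece over $M_0$, contradicting that every translate of $p$ avoids the bounded pieces. Concretely, I would show that boundedness is a translation-invariant notion on the weak manifold and that the fsg generic type must concentrate on a bounded definable set, forcing $G$ itself to be bounded.

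The main obstacle I expect is the interaction between the weak manifold's finitely-many affine charts and the $\Gamma$-exhaustion: one must check that boundedness transfers correctly across the finite-to-one open chart maps $\pi_i$ and that the group operation, being continuous only on the large subset $O$, still lets translation act on the exhaustion levels in a controlled way. Handling the ``boundary behavior'' — ensuring that an escaping sequence in $G$ corresponds to escaping behavior in at least one chart $U_i \subset_{\text{open}} \M^n$ where Theorem~\ref{def-cpt-aff} applies — is where the real care is needed, since $G$ is covered by several charts and the escape could be distributed among them; a pigeonhole/compactness argument over the finitely many charts should resolve this, reducing the problem to the affine case already settled.
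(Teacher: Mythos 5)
The paper gives no details here --- it simply invokes Johnson's Proposition 3.1 of \cite{J} --- so the comparison is really with that argument. Your proposal assembles the right ingredients (the $\Gamma$-exhaustion $(G_\gamma)$, the affine characterization of definable compactness, finite satisfiability of the fsg generic in a small model $M_0$), but the key deduction runs in the wrong direction and leaves a genuine gap. You assert that one can take $a\models p$ ``lying outside $G_\gamma$ for all $\gamma$ realized in the small model \ldots because $G$ is unbounded and $p$ is a genuinely generic type.'' Unboundedness of $G$ does not tell you where the generic type concentrates, and in fact the point of the argument is the opposite: $p$ is \emph{trapped} in a single bounded piece. Choose $\gamma^*\in\Gamma(\M)$ above $\Gamma(M_0)$ (possible by saturation); then $G(M_0)\subseteq G_{\gamma^*}$ since every point of $G(M_0)$ lies in some $G_\gamma$ with $\gamma\in\Gamma(M_0)$ and the exhaustion is increasing. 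If $G\setminus G_{\gamma^*}\in p$, finite satisfiability of $p$ in $M_0$ would produce $b\in G(M_0)\setminus G_{\gamma^*}$, a contradiction. Hence $G_{\gamma^*}\in p$. Your closing step, ``every translate of $p$ avoids the bounded pieces,'' is likewise unestablished: if $G\setminus G_\gamma\in p$ then $gp$ avoids $gG_\gamma$, not $G_\gamma$, so no contradiction of the kind you describe is available.

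The argument then finishes directly, not by contradiction: since every left translate of $p$ is finitely satisfiable in $M_0$, every set in $p$ is generic (every left translate of $G_{\gamma^*}$ meets $G(M_0)$, and by the standard fsg/NIP machinery of Hrushovski--Pillay this yields that finitely many left translates of $G_{\gamma^*}$ cover $G$). Each translate $g_iG_{\gamma^*}$ is definably compact, being the image of the definably compact set $G_{\gamma^*}$ under the homeomorphism $x\mapsto g_ix$ of the weak Lie structure, and a finite union of definably compact sets is definably compact; hence $G$ is definably compact. So the missing idea is the dichotomy ``either some exhaustion piece lies in $p$ --- and then it is generic and definably compact, so $G$ is covered by finitely many definably compact translates --- or else finite satisfiability over $M_0$ is violated at $\gamma^*>\Gamma(M_0)$.'' Your worry about distributing an escaping sequence among the finitely many charts is not where the difficulty lies; that part is handled once and for all by the fact that each $G_\gamma$ is definably compact.
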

For the other direction on $\C((t))$, we have the revised conjecture of \cite{LY} that
\begin{conj}
Let $G$ be a definably compact group defined over $\C((t))$. Then
\begin{enumerate}
    \item  $G$ is generically stable;
    \item  $G^{00}=G^0$ is a finite index subgroup of $G$. 
\end{enumerate}
\end{conj}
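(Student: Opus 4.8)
The plan is to establish both clauses by exhibiting a generically stable generic type on $G$ and transporting it through the weak Lie structure of the previous section. The model case is $(\cO,+)$, whose generic $p_{\trans,\M}$ is already known to be generically stable; the strategy is to reduce an arbitrary definably compact $G$ to finitely many bounded affine charts on which the relevant types are, up to an affine change of coordinates and a finite-to-one correspondence, governed by $p_{\trans,\M}$. The conceptual heart is the expectation, familiar from $\ACF$-like valued fields, that a type is generically stable exactly when it is orthogonal to the value group $\Gamma$, and that definable compactness (closed and bounded) forces this orthogonality.

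For clause (1), first I would fix a chart $O_{i_0}$ of full dimension $d=\dim G$ from Lemma~\ref{key-lemma}, with $\pi_{i_0}\colon O_{i_0}\to U_{i_0}\subset_\text{open}\M^d$ finite-to-one, continuous and open. Since $G$ is definably compact, hence bounded, $U_{i_0}$ is bounded, so after an affine change $x\mapsto a+cx$ (a homeomorphism by Theorem~\ref{def-cpt-aff}) we may assume $U_{i_0}\subset\cO^d$. Let $q$ be the type on $U_{i_0}$ of a tuple of $d$ residue-transcendental, algebraically independent coordinates; up to the affine change this is $p_{\trans,\M}^{\otimes d}$ localised to $U_{i_0}$, and is therefore generically stable. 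Pulling $q$ back along one $\acl$-branch of the finite-to-one $\pi_{i_0}$ produces a type $p$ on $G$ that is interalgebraic with $q$; since generic stability is preserved under interalgebraicity, $p$ is generically stable. It then remains to check that $p$ is a genuine \emph{generic} of $G$, i.e.\ that its left translates are consistent and $\dpr(p)=\dpr(G)$, where definable compactness is used to guarantee that no translate escapes to the boundary. Exhibiting such a generically stable generic is precisely what clause (1) asserts, and entails that $G$ is fsg.

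The crucial input to be isolated is that $p$, and hence the generic of $G$, is orthogonal to $\Gamma$. Concretely, I would show that the pushforward of $p$ to any interpretable $\Gamma$-sort is concentrated on a single bounded definable set, and that boundedness together with closedness (definable compactness) collapses this to a point; this is where Theorem~\ref{def-cpt-aff} and the structure of $p_{\trans,\M}$, whose residue is the transcendental type over $k_\M$ and whose $\Gamma$-part is trivial, do the work. Orthogonality to $\Gamma$ then upgrades finite satisfiability plus definability of $p$ to full generic stability, using that the residue field is $\ACF_0$ (stable), so the residual content of $p$ is automatically generically stable.

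For clause (2), generic stability makes $G$ behave like a stable group, so $G^{00}=G^0$ by the usual stabiliser and connected-component argument for fsg groups with generically stable generics. The finite-index statement is the genuinely extra content and the main obstacle: unlike a stable group such as $(\Z,+)$, a definably compact $G$ must have only finitely many definable finite-index subgroups. I would attempt this by showing that $G/G^0$ is controlled by the residue field, so that the $\Gamma$-orthogonality of the generic trivialises the value-group contribution to components while the residual picture lives in $\ACF_0$, where algebraic groups are connected-by-finite. Making the reduction of $G/G^0$ to a residual, hence finite, quotient precise---through the finite-to-one charts, which do not respect the group law globally---is the part I expect to be hardest, and it is the step on which the whole conjecture hinges.
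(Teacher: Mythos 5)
First, a point of order: the statement you are proving is presented in the paper as a \emph{conjecture} --- it is the ``revised conjecture of \cite{LY}'' and the paper offers no proof of it, so there is nothing to compare your argument against; it would need to stand entirely on its own. As written, it does not: it is a plan whose two hardest steps are deferred rather than carried out. The central gap in clause (1) is that the charts of Lemma \ref{key-lemma} do not interact with the group law. You build a generically stable type $p$ on a single chart $O_{i_0}$ by pulling back a coordinate type along $\pi_{i_0}$, but to conclude that $G$ is fsg (or ``generically stable'' as a group) you need control over \emph{all} left translates $g\cdot p$, and translation by a generic $g$ moves realizations of $p$ out of $O_{i_0}$ into other charts, where you have no description of the type at all. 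Saying that ``definable compactness is used to guarantee that no translate escapes to the boundary'' names the desired conclusion, not an argument; this is exactly the content of the conjecture. There is also a local problem with the construction itself: an open bounded definable $U_{i_0}\subset\cO^d$ need not contain any realization of $p_{\trans,\M}^{\otimes d}$ (take $U_{i_0}=\m^d$, whose residue is a single point), so ``the type on $U_{i_0}$ of $d$ residue-transcendental coordinates'' may be inconsistent; one must first rescale as in the proof of definable compactness of $\C[[t]]$, and the resulting type is neither unique nor canonically attached to $U_{i_0}$.

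Clause (2) is acknowledged but not proved. Even granting a generically stable generic, the standard stabilizer argument gives $G^{00}=G^{0}$ but says nothing about this subgroup having \emph{finite} index; that requires ruling out infinite definable chains of finite-index subgroups, and your proposed reduction of $G/G^{0}$ to the residue field is asserted through charts that, as you yourself note, do not respect multiplication. The claim that ``orthogonality to $\Gamma$ upgrades finite satisfiability plus definability to full generic stability'' is also stated without justification and is doing real work. In short, the proposal correctly identifies the expected mechanism (orthogonality to $\Gamma$ forced by boundedness, residual content governed by $\ACF_0$), which is plausibly the right heuristic, but every step where that heuristic must be converted into a proof --- genericity of $p$, behaviour under translation across charts, and finiteness of $[G:G^{0}]$ --- is left open. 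The statement should be treated as the open problem the paper says it is.
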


\section{More results}\label{sect-app}

Firstly, we can modify the proof of \cite{JY-1} to show that

\begin{theorem}
Let $G$ be an abelian group definable over $\C((t))$. If $G$ is not definably compact, then it has a one-dimensional unbounded subgroup. 
\end{theorem}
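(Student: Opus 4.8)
The plan is to adapt the Peterzil–Steinhorn argument for o-minimal groups (as carried out in the $p$-adic setting in \cite{JY-1}) to this context. The overall strategy is: if an abelian group $G$ is not definably compact, then by the previous results it fails to be closed-and-bounded, so along its $\Gamma$-exhaustion there is a definable curve escaping to infinity; from such a curve I want to extract a genuine one-dimensional unbounded subgroup. Concretely, I would first use the weak Lie structure and the $\Gamma$-exhaustion to produce a definable one-parameter object: since $G$ is not definably compact, there is a definable map $\gamma$ from (a cofinal piece of) $\Gamma$, or from a one-dimensional definable set with a limit point on its frontier, into $G$ whose image is unbounded and has no limit in $G$. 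The failure of definable compactness is exactly the existence of such an unbounded definable curve, by the closed-and-bounded criterion (Theorem \ref{def-cpt-aff} transported to the weak manifold).

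Next, the heart of the argument is to convert this escaping curve into a subgroup. The standard device is to consider the germ at infinity of $\gamma$ and use the group operation to close it up. In the o-minimal case one looks at $\lim_{s} \gamma(s+t)\gamma(s)^{-1}$ to manufacture a one-parameter subgroup; here the analogue should use the generically stable type associated to the curve together with the group structure. I would take the type $p$ at infinity along $\gamma$ (made stationary using generic stability, which holds for the relevant transcendental/generic types as recalled before Lemma 1.1 and used throughout Section \ref{sect-def-cpt}), and form its stabilizer $\Stab(p)$ in $G$. Because $G$ is abelian and $p$ is generically stable, $\Stab(p)$ is a definable subgroup; the unboundedness of $\gamma$ forces $\Stab(p)$ to be non-trivial and, in fact, unbounded, while a dimension count (the curve is one-dimensional and the stabilizer cannot be all of a higher-dimensional bounded piece) pins its dimension. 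I would then cut $\Stab(p)$ down, if necessary, to a one-dimensional definable subgroup using the fact that definable subgroups have definable connected components of each intermediate dimension.

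I expect the main obstacle to be showing that the resulting subgroup is genuinely \emph{unbounded} rather than merely infinite. In the abelian setting one must rule out the possibility that the escaping behaviour of $\gamma$ is ``absorbed'' into a bounded subgroup after translation, i.e.\ that $\Stab(p)$ is bounded while $\gamma$ escapes transversally to it. The way around this is to argue that if every one-dimensional definable subgroup were bounded (hence definably compact by Theorem \ref{def-cpt-aff} on the weak manifold), then a suitable quotient or the whole of $G$ would inherit definable compactness, contradicting the hypothesis. Making this reduction precise requires the good neighborhood basis and $\Gamma$-exhaustion machinery from Section \ref{sect-group} to control how translates of a bounded subgroup sit inside $G$, and to ensure that ``unboundedness in the $\Gamma$-exhaustion sense'' is preserved when passing to the subgroup $\Stab(p)$.

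Finally, I would verify the one-dimensionality and definability cleanly: the subgroup produced is definable over the parameters of $\gamma$ (hence over $\M$, and by standard descent over $\C((t))$ if one wants the sharper statement), and its dimension is exactly one after the final cut-down. The abelianness of $G$ is used crucially to guarantee that $\Stab(p)$ is a subgroup (not merely a coset stabilizer) and that the various translation tricks commute; this is why the theorem is stated for abelian $G$, and it is the feature that lets the $\C((t))$ proof track the $p$-adic one in \cite{JY-1} without needing the full non-abelian apparatus.
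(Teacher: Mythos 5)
Your high-level strategy --- transporting the Peterzil--Steinhorn argument via \cite{JY-1} --- is the same as the paper's, but the paper treats the transfer itself as the entire content of the proof, and the points it isolates as needing verification are exactly the ones your sketch skips. The paper's proof consists of: (i) noting that the bad gap configuration theorem (Section 4.2 of \cite{JY-1}) holds for dp-finite fields and that the bounded-index claim in Lemma 5.8 of \cite{JY-1} survives in this setting; and (ii) constructing the infinitesimal subgroup $\mu$ and the standard part map over $\C((t))$, which is where this case genuinely differs from $\Qp$: by Delon's classification of $1$-types \cite{Type} there are types extending the residue field, so one must take $\nu$ to be the set of realizations of the types containing $v(x)>n$ for every $n\in\Z=\Gamma_{\C((t))}$, set $\mu=\nu^n$ on affine space, transport it to $G$ through the affine chart at $1_G$ provided by the weak Lie structure (Remark \ref{mfd}), check $g^{-1}\mu_G g=\mu_G$ for $g\in G(\C((t)))$, and define $\st\colon V_G=\mu_G\, G(\C((t)))\rightarrow V_G/\mu_G=G(\C((t)))$. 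None of this appears in your proposal, and without it the instruction ``follow \cite{JY-1}'' has no content here, since the $p$-adic construction of $\mu$ does not carry over verbatim.

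Beyond that, your replacement mechanism has concrete gaps of its own. The assertion that unboundedness of the escaping curve forces $\Stab(p)$ to be nontrivial and unbounded is precisely the hard point of Peterzil--Steinhorn, and your proposed way around it (``if every one-dimensional definable subgroup were bounded, then $G$ would be definably compact'') is essentially the contrapositive of the theorem being proved, so the argument is circular as stated. The final ``cut down to dimension one'' step invokes the claim that definable groups admit definable connected subgroups of every intermediate dimension; no such fact is available (a definably compact group of dimension $2$, e.g.\ one coming from a simple abelian surface, need not have any one-dimensional definable subgroup, which is consistent with the theorem but fatal to your cut-down step). Avoiding the need for such a structure theory is exactly why \cite{JY-1} works with the gap configuration and the standard part map $\st$ rather than with a bare type-stabilizer, and it is that machinery --- together with the $\C((t))$-specific definition of $\mu$ --- that your proposal would need to supply.
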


There are two key points. One point is the bad gap configuration theorem (see Section 4.2 \cite{JY-1}) which holds for dp-finite fields. Another point is that in Lemma 5.8 \cite{JY-1}, the intersection has a bounded index which is also true in our settings. Now, the only thing we need to explain are $\mu$ and the standard part map that occurred in the proof. 

We first explain the standard part map on affine space and especially on $\M^1$. Note that $1$-types over $\C((t))$ are clear (see \cite{Type}). The difference to $\Q_p$ is that there exist types extending the residue field.
Note that every $1$-type over $\C((t))$ is definable. We let $\nu$, the $\mu$ for $\M^1$, to be the collection of realizations of the types containing $v(x)>n$ for every $n\in \Z=\Gamma_{\C((t))}$. Let $\mu$ on $\M^n$ to be $\nu^n$. 
Let $G$ be a definable group over $\C((t))$. We give $\mu_G$ by the weak Lie structure. As $1_G\in G(\C((t)))$, by Remark \ref{mfd} or Lemma 3.2 \cite{PS}, we have a neighborhood $U$ of $1_G$ defined over $\C((t))$ such the correspondence on the piece containing $1_G$ splits. Hence, $U$ is an affine open subset containing $1_G$ and we give $\mu_G$ on $U$. It is easy from continuity that $g^{-1} \mu_G g=\mu_G$ for $g\in G(\C((t)))$.  Let $V_G=\mu_G G(\C((t)))$. Then $\mu_G$ is normal subgroup of $V_G$. We let the standard part map to be the natural projection of $V_G\rightarrow V_G/\mu_G=G(\C((t)))$. 

Then follow the proof in \cite{JY-1}, we have the above Peterzil–Steinhorn theorem on abelian definable groups proved.

Secondly, while $\C((t))$ is not locally compact, but it is ``locally definably compact'' in our settings. Then we have something originally on locally compact fields (Theorem 3.1 \cite{lcpt}). 

\begin{prop}
Let $G$ be an algebraic group defined over $\C((t))$, and let $H$ be a algebraic subgroup
of $G$ over $\C((t))$.\footnote{For convenience, we write $G=G(\C((t)))$.} The quotient $G/H$\footnote{It is a geometric quotient and hence, definable.} is definably compact if and only if H contains a
maximal connected $\C((t))$-split solvable subgroup of the connected component (as an algebraic group)
of $G$. 
\end{prop}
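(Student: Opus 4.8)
The plan is to transport the proof of \cite{lcpt}, Theorem 3.1, to our setting, replacing topological compactness of $G(k)$ over a locally compact field $k$ by definable compactness; this is legitimate because every step of that argument only uses formal properties of (definable) compactness that Section \ref{sect-def-cpt} supplies: closed and bounded is the same as definably compact (Theorem \ref{def-cpt-aff}), $\cO$ and every projective $\C((t))$-variety are definably compact (Remark \ref{flag}), and definable compactness is preserved under finite products, closed definable subsets and continuous definable images. First I would reduce to $G$ connected, replacing $G$ by $G^0$ and $H$ by $H\cap G^0$: this changes $G/H$ only by a finite disjoint union of translates, which does not affect definable compactness, and every maximal connected $\C((t))$-split solvable subgroup already lies in $G^0$. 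Using the Levi decomposition in characteristic $0$ I would then reduce to $G$ reductive, since the unipotent radical $R_u(G)$ is split and normal, hence lies in every maximal connected $\C((t))$-split solvable subgroup, and its points $R_u(G)(\C((t)))\cong\C((t))^N$ form an unbounded group that must be absorbed by $H$ in any case. Fix a maximal $\C((t))$-split torus $T$, a minimal parabolic $P=Z_G(T)\cdot R_u(P)$ containing it, and set $S_0=T\cdot R_u(P)$; since all such subgroups are $G(\C((t)))$-conjugate (Borel--Tits) and definable compactness of $G/H$ is conjugation-invariant, for the ``if'' direction it suffices to test $H$ against this fixed $S_0$.

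For the ``if'' direction assume $S_0\subseteq H$. Then the definable projection $G/S_0\to G/H$ is continuous and surjective, so it is enough to show that $G/S_0$ is definably compact, and for this I would use the fibration $G/S_0\to G/P$. The base $G/P$ is a (partial) flag variety, a projective $\C((t))$-variety and hence definably compact by Remark \ref{flag}. The fibre is $P/S_0\cong Z_G(T)/T$, the anisotropic kernel; since $\C((t))$ has cohomological dimension $1$ (its residue field $\C$ is algebraically closed) this anisotropic kernel is an anisotropic torus, and the points of an anisotropic torus over $\C((t))$ coincide with its bounded, i.e.\ $\cO$-valued, points, hence are definably compact because $\cO$ is. As $G/S_0\to G/P$ is a Zariski-locally trivial bundle, on finitely many trivialising affine charts its points are, up to a definable homeomorphism, products of a definably compact piece of $G/P$ with the definably compact fibre; the product, finite-union and closedness properties of Section \ref{sect-def-cpt} then give that $G/S_0$, and therefore $G/H$, is definably compact.

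For the ``only if'' direction, which I expect to be the harder one, I would follow the reduction-theoretic argument of \cite{lcpt}. The Iwasawa decomposition $G(\C((t)))=G(\cO)\,S_0(\C((t)))$ holds by Bruhat--Tits theory, since $G$ is quasi-split over $\C((t))$ and has a hyperspecial parahoric (again because $\C$ is algebraically closed); combined with definable compactness of $G(\cO)\subseteq\cO^N$ this shows that $G/H$ is definably compact iff the image of $S_0(\C((t)))$ in $G/H$ is bounded. That image is the homogeneous space $S_0/(S_0\cap H)$ of the connected split solvable group $S_0$; being an iterated extension of copies of $\Ga$ and $\Gm$, such a homogeneous space has bounded $\C((t))$-points only when it is $0$-dimensional, because $\val$ is unbounded on $\Ga(\C((t)))=\C((t))$ and on $\Gm(\C((t)))=\C((t))^*$. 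Hence boundedness forces $\dim(S_0\cap H)=\dim S_0$, and a dimension-and-connectedness argument then upgrades this to $S_0\subseteq H$ up to $G(\C((t)))$-conjugacy, i.e.\ $H$ contains a maximal connected $\C((t))$-split solvable subgroup.

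The main obstacle is this ``only if'' direction, and more precisely the two structure-theoretic inputs that over a genuinely locally compact field are classical but that here must be re-proved purely in terms of the valuation: the Iwasawa decomposition $G(\C((t)))=G(\cO)\,S_0(\C((t)))$, and the properness needed to pass from boundedness of the $S_0$-orbit to $S_0\subseteq H$ (equivalently, local closedness of $S_0H$ together with the fact that a positive-dimensional split-solvable homogeneous space is unbounded over $\C((t))$). The parallel input for the ``if'' direction is that an anisotropic torus $T'$ over $\C((t))$ satisfies $T'(\C((t)))=T'(\cO)$, hence has definably compact points. Once these are in place, all remaining compactness bookkeeping — products, closed subsets, continuous images, and the passage through the $\Gamma$-exhaustion — is routine given Theorem \ref{def-cpt-aff} and Remark \ref{flag}; the only care required is to check that ``bounded on each affine chart'' and the unboundedness of the split directions are expressed correctly relative to the $\Gamma$-exhaustion of $G/H$.
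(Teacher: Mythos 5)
Your proposal is correct and takes essentially the same route as the paper, whose entire proof consists of the instruction to ``check the original proof in \cite{lcpt}'' together with the two remarks that the flag variety is definably compact (Remark \ref{flag}) and that the implicit function theorem follows from henselianity; your writeup is a faithful, much more detailed elaboration of exactly that transport, correctly isolating the non-formal inputs (Iwasawa decomposition, boundedness of anisotropic tori, unboundedness of split directions). The one input the paper names explicitly but you only invoke implicitly is the henselian implicit function theorem, which is what underlies your local-triviality/definable-homeomorphism step on charts of $G/S_0\to G/P$.
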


One can check the original proof in \cite{lcpt}. Note that the space of complete $n$-flags is definably compact (Remark \ref{flag}) and the implicit function theorem is from the henselian property. 

Then we have some conjectures

\begin{conj}
Let $G$ be a group defined in $\C((t))$. Then $G^0=G^{00}$. Consequently (like Corollary 4.3 \cite{JY-2}), $G$ is virtually an open subgroup of an algebraic group over $\C((t))$.
\end{conj}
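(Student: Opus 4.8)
The plan is to mimic the two–step strategy behind Corollary 4.3 of \cite{JY-2}: first prove a structure theorem presenting $G$ as virtually an open subgroup of an algebraic group, and then reduce the equality $G^0=G^{00}$ to the algebraic case, where it can be verified on the building blocks. For the structure theorem I would start from the weak Lie structure established above: on a large subset of $G$ we have finitely many charts realizing $G$ locally as a finite-to-one continuous open image of opens in $\M^n$, with continuous group operations. Working near $1_G$ on the affine piece furnished by Remark \ref{mfd}, one extracts a definable local group living on an open subset of some $\M^d$ whose multiplication and inversion are continuous, and indeed, by the henselian implicit function theorem, given locally by convergent power series. The heart of the matter is to run a Weil–Hrushovski group-chunk argument recovering from this local datum a genuine algebraic group $H$ over $\C((t))$ together with a definable isomorphism between a neighborhood of $1_G$ in $G$ and a neighborhood of $1$ in $H(\C((t)))$; globalizing the chunk then exhibits a finite-index subgroup of $G$ as an open subgroup of $H(\C((t)))$.

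I expect this group-chunk globalization to be the main obstacle. One must upgrade the merely topological weak-manifold data to an algebraic group, which requires recovering the field $\C((t))$ from the group configuration (a definable-field-recognition step) and invoking the implicit function theorem to pass from continuous to polynomial multiplication. The infinite, algebraically closed residue field $\C$ changes the bookkeeping relative to the locally compact $p$-adic case, so the argument cannot use local compactness directly; instead the $\Gamma$-exhaustion and the definable-compactness criterion of Theorem \ref{def-cpt-aff} must play that role. Granting the structure theorem, the reduction of $G^0=G^{00}$ to the algebraic case is routine: passing to a finite-index subgroup changes neither $G^0$ nor $G^{00}$, and an open subgroup of $H(\C((t)))$ shares the same infinitesimal neighborhood $\mu$ of the identity, so it suffices to treat $H(\C((t)))$ for $H$ a connected algebraic group. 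Using a Chevalley decomposition I would split $H$ into a linear part and an abelian variety $A$. For $A$, which is complete, the embedding $A\hookrightarrow\PP^N$ and Remark \ref{flag} show $A(\M)$ is definably compact; rather than invoking the (still conjectural) implication definably compact $\Rightarrow$ fsg, I would compute $A^0=A^{00}$ directly from the N\'eron model, whose kernel-of-reduction is a connected formal group and whose special fiber over $\C$ is connected in $\ACF_0$.

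For the linear part I would descend through tori and unipotent groups. For $\Gm$ the finite-index definable subgroups are exactly the valuation-parity groups $\{x:\val(x)\in n\Gamma\}$, whose intersection is $\cO^{\ast}$, so $\Gm^0=\cO^{\ast}$; since $\cO^{\ast}$ is an extension of the connected group $\C^{\ast}$ by $1+\m\cong(\m,+)$, and both are connected (divisible, with no proper bounded-index type-definable subgroup), one gets $\cO^{\ast 00}=\cO^{\ast}$ and hence $\Gm^{00}=\Gm^0$; the same divisibility argument gives $\Ga^{00}=\Ga^0=\Ga$. Iterating along the unipotent radical and the solvable filtration, and combining with the torus and abelian-variety cases, yields $H^0=H^{00}$ for all connected $H$, and therefore $G^0=G^{00}$ in general, with the stated structural consequence following exactly as in \cite{JY-2}. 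The decisive conceptual point throughout is that, unlike over $\Qp$, neither the residue field $\C$ nor the value group $\Z$ produces small-index definable subgroups beyond those already visible in the valuation, so no compact connected quotient can survive in $G^0/G^{00}$.
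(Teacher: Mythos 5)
The statement you are trying to prove is presented in the paper as a \emph{conjecture}: no proof is given, and the author even cautions that the first conjecture ``is not true in general'' when transplanted to other models. So there is no paper proof to compare against, and the question is whether your proposal actually closes the problem. It does not. The decisive step in your plan --- upgrading the weak Lie structure to a genuine algebraic group over $\C((t))$ via a Weil--Hrushovski group-chunk argument --- is exactly the hard open content, and you flag it yourself as ``the main obstacle'' without supplying an argument. Worse, the logical order is inverted relative to the conjecture as stated: there, as in Corollary 4.3 of \cite{JY-2}, the algebraicity statement is a \emph{consequence} of $G^0=G^{00}$. Deriving $G^0=G^{00}$ from a prior structure theorem therefore presupposes an unproved assertion that is at least as strong as the conjecture itself; nothing in the paper (the weak manifold structure, the $\Gamma$-exhaustion, the definable-compactness criterion) comes close to yielding it, since $\C((t))$ lacks the local compactness that drives Pillay's local algebraicity results over $\Qp$.

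There are also concrete errors in the part you treat as routine. The computation $\Gm^0=\cO^{\ast}$ is wrong in the monster model: the subgroups $\{x:\val(x)\in n\Gamma\}$ intersect in $\{x:\val(x)\in\bigcap_n n\Gamma\}$, and in a saturated model of Presburger arithmetic $\bigcap_n n\Gamma$ is a large divisible subgroup, not $\{0\}$; so $\Gm^0$ properly contains $\cO^{\ast}_{\M}$. (The conclusion $\Gm^0=\Gm^{00}$ can likely be salvaged from $\Gamma^0=\Gamma^{00}$ for $\Z$-groups together with connectedness of $\cO^{\ast}$ as an extension of $k^{\ast}$ by $1+\m$, but the argument as written does not do this.) Similarly, the reduction ``an open subgroup of $H(\C((t)))$ shares the same $\mu$, so it suffices to treat $H$'' is too quick: open definable subgroups may have infinite index (e.g.\ $\cO^{\ast}\le\Gm$), and neither $G^0$ nor $G^{00}$ of an open subgroup is determined by that of the ambient group. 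As it stands, your text is a reasonable research program consistent with the paper's intent, but it is not a proof, and the central gap --- local algebraicity of definable groups over $\C((t))$ --- remains untouched.
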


\begin{conj}
Let $G$ be a dfg group defined in $\C((t))$. Then $G$ is virtually a $\C((t))$-split solvable algebraic group.
\end{conj}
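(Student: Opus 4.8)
The plan is to reduce the statement to the structure theory of connected algebraic groups over $\C((t))$ and then to eliminate every "compact" (that is, fsg) subquotient by exploiting that $G$ is dfg. First I would pass to the algebraic setting. Granting the preceding conjecture that $G^0=G^{00}$, together with the $\C((t))$-analogue of Corollary 4.3 of \cite{JY-2}, the group $G$ is virtually an open subgroup of $H(\C((t)))$ for some algebraic group $H$ defined over $\C((t))$. Since being dfg is insensitive to passing to and from finite-index open subgroups, and since being virtually $\C((t))$-split solvable is insensitive to finite index, I may replace $G$ by such a subgroup and assume $G=H(\C((t)))$ with $H$ connected. The goal then becomes concrete: if $H(\C((t)))$ is dfg, then $H$ is $\C((t))$-split solvable.

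The engine of the argument is an \emph{orthogonality} principle between dfg and fsg which I would establish in this theory: a group that is simultaneously dfg and definably compact must be finite. The input is the Proposition above that fsg groups are definably compact, together with Theorem \ref{def-cpt-aff} and the dual natures of the generic types (definable for dfg, finitely satisfiable for fsg): a global definable f-generic cannot coexist with definable compactness on an infinite group. Granting this, I would run the structure theory over the characteristic-$0$ (hence perfect) field $\C((t))$. Let $R(H)\trianglelefteq H$ be the solvable radical, so $S:=H/R(H)$ is semisimple and is a quotient of $H$; since quotients of dfg groups are dfg, the image of $H(\C((t)))$ in $S(\C((t)))$ is dfg of finite index. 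A nontrivial semisimple $S$ contains definably compact subgroups of positive dimension, for instance the integral points $S(\cO)$, which are closed and bounded, hence fsg; orthogonality then forces $S$ trivial, i.e. $H$ is solvable. A connected solvable $H$ in characteristic $0$ splits as $H=T\ltimes R_u(H)$ with $T$ a maximal torus and $R_u(H)$ its unipotent radical, which is $\C((t))$-split, so $R_u(H)(\C((t)))$ is an iterated extension of copies of $(\Ga,+)=(\C((t)),+)$ and is dfg. Finally $T\cong H/R_u(H)$ is a dfg quotient, and any anisotropic part $T_{\mathrm{an}}$ has definably compact point group $T_{\mathrm{an}}(\C((t)))$ (as one checks on examples such as the norm-one torus of $\C((t^{1/2}))/\C((t))$), so orthogonality forces $T_{\mathrm{an}}$ trivial and $T$ split. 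Hence $H$ is $\C((t))$-split solvable, as desired.

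The main obstacle is precisely the orthogonality principle, and more broadly the dfg/fsg dictionary, which for $\C((t))$ is only being set up in the present note. Unlike the p-adic situation of \cite{JY-2}, I would first have to prove the converse direction that definably compact implies fsg in the language of valued rings, together with the preservation of dfg under quotients and under commensurability; the step "dfg implies solvable" is the deepest consequence and is the genuine heart of the matter. A secondary difficulty is that the initial reduction leans on the unproven companion conjecture $G^0=G^{00}$; absent it, one would instead have to recover the algebraic group $H$ directly from the generically stable dfg generic type via a group-configuration argument in the style of Pillay, which is itself substantial. I therefore expect the solvability step and the foundational dfg/fsg correspondence to absorb essentially all of the difficulty, the algebraic bookkeeping afterwards being routine.
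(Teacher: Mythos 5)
First, a point of orientation: this statement is one of the paper's open conjectures, stated in Section \ref{sect-app} without proof, so there is no proof in the paper to compare yours against; your text is a conditional strategy sketch, and it should be judged on whether its reductions are sound in this theory.

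They are not, and the failure is concrete. The engine of your argument, the ``orthogonality principle'' that a group which is simultaneously dfg and definably compact must be finite, is false over $\C((t))$, and the paper itself records the counterexample: in Section \ref{sect-def-cpt} it is noted that $p_{\trans,\M}$ is generically stable and is \emph{both} a dfg and an fsg type for $(\cO,+)$, and the Lemma there shows $\C[[t]]$ is definably compact. So $(\cO,+)$ is an infinite group that is at once dfg, fsg, and definably compact. The dichotomy you are importing is a genuinely $p$-adic phenomenon: in $\pCF$ there are no nontrivial generically stable types, so a definable generic cannot also be finitely satisfiable over a small model; over $\C((t))$ the transcendental residue-field type destroys exactly this. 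Consequently every step of your argument that invokes orthogonality collapses: the semisimple quotient is not killed by exhibiting the fsg subgroup $S(\cO)$ (an fsg subgroup does not obstruct the ambient group being dfg here), and likewise the anisotropic torus $T_{\mathrm{an}}(\C((t)))$ being definably compact does not force $T_{\mathrm{an}}$ trivial --- indeed, if the conjecture and the expected dfg/fsg picture over $\C((t))$ are right, bounded groups such as $T_{\mathrm{an}}(\C((t)))$ are precisely the generically stable ones, both dfg and fsg, and the conjecture's force is that the \emph{split} solvable part, not the absence of compact pieces, is what characterizes dfg groups. Your remaining reductions are also conditional on statements the paper leaves open: the passage to an algebraic $H$ uses the companion conjecture $G^0=G^{00}$ (which the paper explicitly warns fails in general in other models), and ``definably compact implies fsg'' is exactly the paper's revised conjecture from Section \ref{sect-group}, not an available tool. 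So beyond being conditional, the proposal rests on a principle refuted by facts already established in this note, and a correct approach would have to argue quite differently --- e.g.\ by analyzing the dfg generic directly and separating its generically stable part from a $p_{\trans}$-like part, rather than by playing dfg against fsg.
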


\begin{conj}
Let $G$ be a group defined in $\C((t))$. Consider everything in virtual sense.
Then there is $H$, a $\C((t))$-definable dfg subgroup of $G$, such that $G/H$ is definable (not only interpretable) and definably compact. 
\end{conj}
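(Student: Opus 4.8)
The plan is to transport to $\C((t))$ the decomposition of a definable group over $\Qp$ into a dfg normal part with a definably compact quotient (as in \cite{JY-2}), carrying out each step through the weak Lie structure of Section \ref{sect-group}. Working in $\M$ and in the virtual sense throughout, I would first produce a candidate $H$ as a maximal virtually normal dfg subgroup of $G$, and then argue that maximality of $H$ forces $G/H$ to be definably compact.

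For the construction of $H$, the starting point is that a dfg group over $\C((t))$ ought to be virtually $\C((t))$-split solvable (the preceding conjecture), so its building blocks are one-dimensional unbounded subgroups, each essentially a copy of the additive group $\Ga$ and hence dfg. I would build $H$ by iterating the Peterzil--Steinhorn mechanism: if $G$ is not already definably compact then --- in the abelian situation of the theorem above --- it contains a one-dimensional unbounded subgroup $C$, which is dfg; one then passes to an appropriate section (a normalizer, or a centralizer quotient) and repeats, accumulating an increasing chain of dfg subgroups. Since $\dim G$ bounds the length of such a chain, the process terminates, and the final term is the desired $H$. The directedness of the $\Gamma$-exhaustion (Section \ref{sect-group}), together with the bounded-index phenomenon recorded for Lemma 5.8 \cite{JY-1}, should keep each stage $\C((t))$-definable and each successive extension dfg.

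For definable compactness of $G/H$, the weak manifold structure reduces the question to the $\Gamma$-exhaustion of the quotient, since definable compactness on a weak manifold is detected on its affine pieces (Section \ref{sect-group}). Were $G/H$ not definably compact, the abelian Peterzil--Steinhorn theorem above would supply an unbounded one-dimensional subgroup of $G/H$; pulling it back along the standard-part map built in Section \ref{sect-app} would yield an unbounded dfg subgroup of $G$ strictly containing $H$, contradicting maximality. That $G/H$ is actually definable, and not merely interpretable, I would obtain as in the algebraic-group proposition of Section \ref{sect-app}: the quotient should embed definably into a product of projective spaces and flag varieties, which are definably compact by Remark \ref{flag}, so that the geometric quotient is realized on a definable set.

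The hard part will be the dfg half. The entire scheme presupposes a workable structure theory for dfg groups over $\C((t))$ --- essentially the conjecture that they are virtually split solvable --- and a Peterzil--Steinhorn theorem valid beyond the abelian case, whereas the theorem proved above is genuinely restricted to abelian $G$. Extending the unbounded-subgroup argument to arbitrary $G$, while simultaneously arranging that the accumulated dfg subgroup is virtually normal, is where a new idea is needed. A secondary difficulty is that the weak manifold has infinitely many affine pieces, so one must check that the standard-part map of Section \ref{sect-app} transports the Peterzil--Steinhorn subgroup back up cleanly across the boundaries between pieces; this is transparent only near $1_G$, where the relevant correspondence splits.
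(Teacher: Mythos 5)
There is nothing to compare your proposal against: the statement is stated in the paper as a \emph{conjecture}, with no proof given, and your proposal does not close it either --- by your own admission it rests on the preceding conjecture (that dfg groups over $\C((t))$ are virtually $\C((t))$-split solvable) and on a Peterzil--Steinhorn theorem beyond the abelian case, neither of which is available. The paper's Peterzil--Steinhorn statement is genuinely restricted to abelian $G$, and even there it only produces a one-dimensional \emph{unbounded} subgroup; nowhere is it shown that such a subgroup is dfg, which is the property your chain construction actually needs at every stage. This is not a cosmetic gap: over $\C((t))$ the dichotomy you are importing from $\Qp$ breaks down, since the paper itself records that $p_{\trans,\M}$ is simultaneously a dfg and an fsg type for $(\cO,+)$, so $(\cO,+)$ is a dfg group that is also definably compact. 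Consequently ``maximal dfg subgroup'' and ``definably compact quotient'' are no longer complementary in the way the $\Qp$ decomposition of \cite{JY-2} exploits, and the maximality contradiction in your second step does not go through as stated: an unbounded one-dimensional subgroup of $G/H$ need not pull back to a dfg subgroup properly extending $H$ unless you separately prove both that it is dfg and that dfg is closed under extensions in this setting.

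Two further points would need independent arguments even granting the above. First, the definability (as opposed to interpretability) of $G/H$: your appeal to embeddings into products of projective spaces and flag varieties is borrowed from the algebraic-group proposition of Section \ref{sect-app} and has no analogue for an arbitrary definable $G$; since $\C((t))$ in the pure valued ring language does not eliminate imaginaries, $G/H$ a priori lives only in $\M^{\mathrm{eq}}$, which is exactly why the paper singles this clause out in the conjecture. Second, the standard part map of Section \ref{sect-app} is defined on $V_G=\mu_G\, G(\C((t)))$ with values in $G(\C((t)))$; it is not a map from $G/H$ to $G$, so ``pulling back along the standard-part map'' is not a defined operation here --- you would need the quotient map $G\to G/H$ and a lifting argument instead. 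Your roadmap is a reasonable transcription of the $p$-adic strategy, but each of these four items (dfg structure theory, non-abelian Peterzil--Steinhorn, closure of dfg under extensions, and elimination of the relevant imaginaries) is an open problem in this context rather than a step to be checked.
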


These conjectures can be also asked in other models. Note that the first conjecture is not true in general.

\end{document}